\newtheorem{theorem}{Theorem}[section]
\newtheorem{lemma}[theorem]{Lemma}
\newtheorem{proposition}[theorem]{Proposition}
\newtheorem{corollary}[theorem]{Corollary}
\theoremstyle{definition}
\newtheorem{definition}[theorem]{Definition}
\newtheorem{example}[theorem]{Example}
\numberwithin{equation}{section}
\begin{document}

\title
{On abelian canonical n-folds of general type}

\author[Rong Du]{Rong Du$^{\dag}$}
\address{Department of Mathematics\\
Shanghai Key Laboratory of PMMP\\
East China Normal University\\
Rm. 312, Math. Bldg, No. 500, Dongchuan Road\\
Shanghai, 200241, P. R. China} \email{rdu@math.ecnu.edu.cn}

\author[Yun Gao]{Yun Gao$^{\dag\dag}$}

\address{Department of Mathematics, Shanghai Jiao Tong University,
Shanghai 200240, P. R. of China}
\email{gaoyunmath@sjtu.edu.cn}

\thanks{$^{\dag}$ The Research Sponsored by the National Natural Science Foundation of China (Grant No. 11471116) and Science and Technology Commission of Shanghai Municipality (Grant No. 13dz2260400).}
\thanks{$^{\dag\dag}$The Research Sponsored by the National Natural Science Foundation of China (Grant No. 11271250,11271251) and SMC program of Shanghai Jiao Tong University.}
\thanks{Both authors are supported by China NSF (Grant No. 11531007)}

 \maketitle

 \begin{abstract}{Let $X$ be a Gorenstein minimal projective $n$-fold with at worst locally factorial terminal singularities, and suppose that the canonical map of $X$ is generically finite onto its image. When $n<4$, the canonical degree is universally bounded. While the possibility of obtaining a universal bound on the canonical degree of $X$ for $n \geqslant 4$ may be inaccessible, we give a uniform upper bound for the degrees of certain abelian covers. In particular, we show that if the canonical divisor $K_X$ defines an abelian cover over $\mathbb{P}^n$, i.e., when $X$ is an \emph{abelian canonical $n$-fold}, then the canonical degree of $X$ is universally upper bounded by a constant which only depends on $n$ for $X$ non-singular. We also construct two examples of non-singular minimal projective $4$-folds of general type with canonical degrees $81$ and $128$.
}\end{abstract}

\section{Introduction}\label{secintro}
The study of the canonical maps of $n$-dimensional projective varieties of general type is one of the central problems in algebraic geometry. For non-singular algebraic surfaces, Beauville (\cite{Bea}) proved that the degree of the canonical map is less than or equal to $36$ and that equality holds if and only if $X$ is a ball quotient surface with $K_X^2=36$, $p_g=3$, $q=0$, and $|K_X|$ is base point free. For $n=3$, Chen posed an open problem in \cite{Ch} as follows. Let $X$ be a minimal projective $3$-fold with at worst locally factorial terminal singularities, and suppose that the canonical map is generically finite onto its image. Is the generic degree of the canonical map universally bounded from above? Later, Hacon (\cite{Ha}) gave some examples of 3-folds of general type with terminal singularities such that the canonical degrees of these 3-folds can be arbitrarily large, but he also showed that the answer of Chen's question is ``yes" if one adds the Gorenstein condition.  More precisely, he showed that if $X$ is a Gorenstein minimal projective $3$-fold with at worst locally factorial terminal singularities, then the canonical degree of $X$ is at most $576$. Recently, the first and the second author improved Hacon's upper bound to $360$, and showed that equality holds if and only if $p_g(X)=4$, $q(X)=2$, $\chi(\omega_X)=5$, $K_X^3=360$ and $|K_X|$ is base point free. For $n<4$, the Miyaoka-Yau inequality plays a vital role in the proof. For $n\ge 4$ however, the Miyaoka-Yau inequality is not effective enough to give a universal bound to control $K_X^n$.

Another open question in this direction is to determine the positive degree of the canonical map. Progress in this direction for surfaces appears in \cite{Bea}, \cite{Tan}, \cite{Cas}, \cite{Per}, \cite{D-G1}, \cite{Rit1}, \cite{Rit2}, \cite{Rit3}, \cite{Yeung} and for $3$-folds in \cite{D-G2}, \cite{Cai}. Explicit examples are often constructed by taking abelian covers. Since the canonical degree may not be universally bounded, it is natural to ask the following question: Can one construct non-singular minimal projective $n$-folds whose canonical divisors define abelian covers over $\mathbb{P}^n$ with arbitrarily large degrees?

\begin{definition}
 Let $X$ be a minimal projective $n$-fold of general type with at worst locally factorial terminal singularities. If $|K_X|$ defines abelian cover over $\mathbb{P}^n$, then we call $X$ an \emph{abelian canonical $n$-fold}.
\end{definition}

In \cite{D-G1}, the first and the second authors obtained a complete classification of abelian canonical surfaces.  The universal upper bound for such surfaces is $16$. In \cite{D-G2}, they showed that for Gorenstein minimal projective $3$-folds of general type with at worst locally factorial terminal singularities, the upper bound of the canonical degrees of such $3$-folds is $32$.  Although we do not know if the canonical degree is universally bounded or not for higher dimensional projective varieties, there is evidence suggesting that the canonical degrees of non-singular abelian canonical $n$-folds may be universally bounded due to the fact that there are very strong restrictions on the defining data of abelian covers. In this paper, we show that the degrees of certain abelian covers are uniformly bounded. In particular, we give a negative answer to the above question. More precisely, we show that the canonical degrees of non-singular abelian canonical $n$-folds are universally bounded. We also construct two examples of non-singular minimal projective $4$-folds of general type with canonical degrees $81$ and $128$ in the last section.

\section{Basics on abelain covers}
The theory of covering is a very important tool in algebraic geometry. The cyclic covers of algebraic surfaces were first studied by Comessatti in \cite{Com}. Later, F. Catanese (\cite{Cat1}) studied smooth abelian covers in the case $(\mathbb{Z}_2)^{\oplus2}$. While R. Pardini analyzed the general case in \cite{Par1},  F. Catanese (\cite{Cat1}) pointed out that it is difficult to give the defining data of abelian covers by Pardini's method. Recently, the second author studied abelian covers of algebraic varieties from another point of view by calculating the normalization bases of the covering spaces which made the constructions more explicit (\cite{Gao}). In this section, we shall recall some basic definitions and results of abelian covers which will facilitate our subsequent discussion. As our goal is to determine the defining equations for covering spaces by explicit calculation, we use the method appearing in \cite{Gao}.

Let $X$ and $Y$ be projective algebraic varieties such that $X$ is normal, $Y$ is non-singular and $\varphi:X\to Y$ is an abelian cover associated with the abelian group $G\cong\mathbb Z_{n_1}\oplus\cdots\oplus\mathbb Z_{n_k}$, where $n_1|n_2\cdots|n_k$ (i.e., the function field $\mathbb C(X)$ of $X$ is an abelian extension of the rational function field $\mathbb C(Y)$ with Galois group $G$).

\begin{definition} \label{def}
Let $G\cong\mathbb Z_{n_1}\oplus\cdots\oplus\mathbb Z_{n_k}$. The data of an abelian cover over $Y$ with group $G$ consists of $k$ effective divisors $D_1$, $\cdots$, $D_k$, and $k$ linear equivalence relations
\[D_1\sim n_1L_1, \cdots, D_k\sim n_kL_k.\]
\end{definition}

Let $\mathscr{L}_i=\mathscr{O}_Y(L_i)$ and $f_i$ be the defining
equation of $D_i$, i.e., $D_i=\text{div}(f_i)$, where $f_i\in H^0(Y,
\mathscr{L}_i^{n_i})$. Denote by
$\textbf{V}(\mathscr{L}_i)=\textbf{Spec}S(\mathscr{L}_i)$ the
line bundle corresponding to $\mathscr{L}_i$, where
$S(\mathscr{L}_i)$ is the sheaf of the symmetric $\mathscr{O}_Y$
algebra, and let $z_i$ be the fiber coordinate of
$\textbf{V}(\mathscr{L}_i)$. Then the abelian cover can be realized
by the normalization of the variety $V$ defined by the system of equations
\begin{equation}\label{eqns}
z_1^{n_1}=f_1, \cdots, z_k^{n_k}=f_k.
\end{equation}
The above paragraph is summarized schematically via the following diagram:

\begin{diagram}
X     & \rTo^{\text{normalization}}   & V & \rdTo^f \rInto  &\oplus_{i=1}^k\textbf{V}(\mathscr{L}_i)\\
      & \rdTo(4,2)_\varphi            &   & \rdTo           &\dTo^p\\
      &                               &   &                 &Y .\\
\end{diagram}
We often make the abuse of saying $X$ is defined by equations (\ref{eqns}), although it should be clear from the context that $X$ is in fact the normalization of the solution $V$ of these equations.

We now list some useful results which will be crucial later on.
\begin{theorem}\label{F}\emph{(\cite{Gao})} Denote by $[Z]$ the integral part of a
$\mathbb Q$-divisor $Z$, and $-L_g=-\sum\limits_{i=1}^k g_i{L}_i+
\left[\sum\limits_{i=1}^{k}\frac{g_i}{n_i}D_i\right]$, where $g=(g_1,\cdots, g_k)\in G$.Then
\begin{eqnarray}\label{eqn1}
\varphi_*\mathcal{O}_X&=\bigoplus\limits_{{g\in G}}\mathcal
O_Y(-L_g).
\end{eqnarray}
\end{theorem}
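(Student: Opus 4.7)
The plan is to identify each $G$-isotypic summand of $\varphi_{*}\mathcal O_{X}$ by first writing it on the non-normal affine model $V$ and then computing how normalisation in codimension one enlarges it to produce the integer-part correction $[\sum_{i}\tfrac{g_{i}}{n_{i}}D_{i}]$.

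The morphism $\varphi$ is finite and Galois with group $G$, and is generically étale. Choosing the dual basis of characters $\chi_{1},\dots,\chi_{k}$ for $G\cong\mathbb Z_{n_1}\oplus\cdots\oplus\mathbb Z_{n_k}$, in characteristic zero the push-forward splits canonically as
\begin{equation*}
\varphi_{*}\mathcal O_{X}=\bigoplus_{g\in G}\mathscr M_{g},
\end{equation*}
where $\mathscr M_{g}$ is the eigensheaf for $\chi^{g}=\chi_{1}^{g_{1}}\cdots\chi_{k}^{g_{k}}$. Each $\mathscr M_{g}$ is a direct summand of the torsion-free $S_{2}$ sheaf $\varphi_{*}\mathcal O_{X}$, of generic rank one, and hence is an invertible sheaf on the smooth variety $Y$.

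To identify $\mathscr M_{g}$ I start from the affine model $V\subset\bigoplus\mathbf V(\mathscr L_{i})$ cut out by $z_{i}^{n_{i}}=f_{i}$. For representatives $0\le g_{i}<n_{i}$, the $\chi^{g}$-eigenspace of $p_{*}\mathcal O_{V}$ is freely generated as an $\mathcal O_{Y}$-module by the fibre monomial $z^{g}=z_{1}^{g_{1}}\cdots z_{k}^{g_{k}}$, and through this trivialisation it is identified with $\mathcal O_{Y}(-\sum_{i}g_{i}L_{i})$. Because $\mathscr M_{g}$ is invertible on the smooth $Y$ it is determined by its codimension-one behaviour, so I fix a prime divisor $E\subset Y$, a local parameter $t$ at its generic point, and set $m_{i}=v_{E}(f_{i})$. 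Any valuation $w$ on $\mathbb C(X)$ extending $v_{E}$ satisfies $n_{i}w(z_{i})=e\cdot m_{i}$ with $e$ the ramification index, so
\begin{equation*}
w\bigl(t^{-a}z^{g}\bigr)=e\Bigl(-a+\textstyle\sum_{i}\tfrac{g_{i}m_{i}}{n_{i}}\Bigr),
\end{equation*}
and $t^{-a}z^{g}$ is integral over $\mathcal O_{Y,E}$ exactly when $a\le[\sum_{i}g_{i}m_{i}/n_{i}]$. Carrying out this sharpest shift at every prime component of every $D_{i}=\sum_{j}m_{ij}E_{j}$ produces the divisorial correction $[\sum_{i}\tfrac{g_{i}}{n_{i}}D_{i}]$, and reflexivity of $\mathscr M_{g}$ upgrades this codimension-one identification to the claimed
\begin{equation*}
\mathscr M_{g}=\mathcal O_{Y}\bigl(-\textstyle\sum_{i}g_{i}L_{i}+[\sum_{i}\tfrac{g_{i}}{n_{i}}D_{i}]\bigr)=\mathcal O_{Y}(-L_{g}).
\end{equation*}
For $g$ with some $g_{i}\ge n_{i}$ one first applies $z_{i}^{n_{i}}=f_{i}$ to replace $g_{i}$ by $g_{i}\bmod n_{i}$; because $D_{i}\sim n_{i}L_{i}$ this reduction is compatible with the right-hand side of the formula.

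The delicate point is the valuation-theoretic step: extending $v_{E}$ to $\mathbb C(X)$, tracking the fractional order $m_{i}/n_{i}$ that $z_{i}$ acquires, and verifying that the sharpest admissible shift of $z^{g}$ by powers of $t$ is exactly $[\sum_{i}g_{i}m_{i}/n_{i}]$. Once this local identity is in hand, reflexivity of $\mathscr M_{g}$ on smooth $Y$ automatically propagates it to the global equality of invertible sheaves asserted in the theorem.
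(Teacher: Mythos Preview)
The paper does not actually supply a proof of this theorem: it is quoted verbatim from the reference \cite{Gao} and used as a black box. So there is no ``paper's own proof'' to compare against here.

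That said, your argument is a correct reconstruction of the standard proof and is essentially the method of \cite{Gao} (and, in a different packaging, of Pardini \cite{Par1}). The key steps --- splitting $\varphi_{*}\mathcal O_{X}$ into character eigensheaves, observing that each summand is reflexive of rank one and hence invertible on the smooth base $Y$, identifying the eigensheaf on the non-normal model $V$ as $\mathcal O_{Y}(-\sum_{i}g_{i}L_{i})$ via the monomial generator $z^{g}$, and then computing the codimension-one correction coming from normalisation by a discrete-valuation argument --- are exactly the ingredients used in the cited source. Your valuation computation is sound: since all extensions of $v_{E}$ to $\mathbb C(X)$ are $G$-conjugate and $z^{g}$ is a $G$-eigenvector with eigenvalues roots of unity, every extension assigns $z^{g}$ the same value, so the integrality threshold $a\le[\sum_{i}g_{i}m_{i}/n_{i}]$ is well-defined and sharp. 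The appeal to reflexivity to globalise the codimension-one identification is the standard closing move.

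One small point worth making explicit in your write-up: the formula $w(z_{i})=e\,m_{i}/n_{i}$ requires that this quantity be an integer, which is automatic once one knows $X$ is normal (so that the local ring at the generic point of a component over $E$ is a DVR) and hence $w$ is $\mathbb Z$-valued; you implicitly use this but do not state it.
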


We remark that equation (\ref{eqn1}) implies the decomposition of $\varphi_*\mathcal{O}_X$ is completely determined by the data of an abelian cover.

\begin{corollary}\label{h}\emph{(\cite{Gao})}
If $X$ is non-singular, $D$ is a divisor on $Y$, then
$$h^i(X, \varphi^*\mathcal{O}_Y(D))=\sum\limits_{g\in G}h^i(Y, \mathcal{O}_Y(D-L_g)).$$
\end{corollary}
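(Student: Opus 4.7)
The plan is to reduce cohomology on $X$ to cohomology on $Y$ and then invoke Theorem~\ref{F} to split the result along the group decomposition. The key structural input is that $\varphi\colon X\to Y$ is a \emph{finite} morphism (it arises as the normalization of the affine variety $V$ in a finite Galois extension of function fields), and hence is affine; therefore $R^j\varphi_*\mathscr F=0$ for all $j>0$ and every quasi-coherent sheaf $\mathscr F$ on $X$, and the Leray spectral sequence collapses to give
$$H^i\bigl(X,\varphi^*\mathcal O_Y(D)\bigr)\;\cong\;H^i\bigl(Y,\varphi_*\varphi^*\mathcal O_Y(D)\bigr).$$

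Next I would apply the projection formula to the locally free sheaf $\mathcal O_Y(D)$, which yields
$$\varphi_*\varphi^*\mathcal O_Y(D)\;\cong\;\mathcal O_Y(D)\otimes_{\mathcal O_Y}\varphi_*\mathcal O_X,$$
and then plug in the eigendecomposition supplied by Theorem~\ref{F}, obtaining
$$\mathcal O_Y(D)\otimes\bigoplus_{g\in G}\mathcal O_Y(-L_g)\;\cong\;\bigoplus_{g\in G}\mathcal O_Y(D-L_g).$$
Taking cohomology termwise and using that $H^i$ commutes with finite direct sums of coherent sheaves immediately produces the claimed equality of dimensions.

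There is no serious obstacle here: the argument is essentially formal once Theorem~\ref{F} is granted. The smoothness hypothesis on $X$ enters only to ensure that $\varphi^*\mathcal O_Y(D)$ is an honest invertible sheaf on $X$ and that $h^i$ refers to its usual sheaf cohomology dimension; the summand decomposition itself is unaffected. The only bookkeeping point worth checking is that the projection formula applies in the stated form, but this is standard since $\mathcal O_Y(D)$ is locally free of finite rank and $\varphi$ is a finite morphism of Noetherian schemes.
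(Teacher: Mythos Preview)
Your argument is correct and is exactly the standard derivation: finiteness of $\varphi$ kills higher direct images, the Leray spectral sequence degenerates, the projection formula gives $\varphi_*\varphi^*\mathcal O_Y(D)\cong \mathcal O_Y(D)\otimes\varphi_*\mathcal O_X$, and Theorem~\ref{F} splits this into the required summands. The paper itself does not supply a proof of this corollary---it is quoted directly from \cite{Gao}---so there is no alternative approach to compare against; your write-up is precisely the argument one would expect to find there.
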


The following result will be used to calculate the ramification
index, since the branching  locus is uniformly ramified for an abelian cover.

\begin{theorem} \label{branch}\emph{(\cite{Gao})}
Let $P$ be an irreducible and reduced hypersurface in $Y$,
let $\bar P=\pi^{-1}(P)$ be the reduced preimage of $P$ in $X$, and
let $a_i$ be the multiplicity of $P$ in $D_i=\textrm{div}(f_i)$.
Then
$$
\pi^*P=\dfrac{|G|}{d_P}\bar P,
$$
where
$$ d_P= \gcd\left(\,|G|, \ |G|\dfrac{a_1}{n_1}
,\ \cdots, \ |G|\dfrac{a_k}{n_k}  \, \right)$$
 is the number of points in the preimage of a generic point on
 $P$.
\end{theorem}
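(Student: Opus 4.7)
My plan is to reduce the statement to a local computation at the generic point $\eta_P$ of $P$ and then exploit classical ramification theory for the Galois cover $\pi$. First I would localize at $\eta_P$ and pass to the Henselization of $A := \mathcal{O}_{Y, \eta_P}$; this is harmless since the invariants $e$, $f$, $g$ only depend on the Henselian-local structure. Writing $f_i = u_i t^{a_i}$ with $u_i \in A^\times$ and $t$ a uniformizer, Hensel's lemma (applicable since $\mathbb{C} \subset A$) extracts $n_i$-th roots of each $u_i$, so after replacing $z_i$ by $z_i/u_i^{1/n_i}$ the local equations become $z_i^{n_i} = t^{a_i}$.

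Since $\pi$ is Galois with group $G$, all primes of $X$ above $\eta_P$ share a common ramification index $e$ and a common residue degree $f$, with $efg = |G|$ and $g$ the number of such primes. Because the local cover is generated purely by fractional powers of the uniformizer $t$ and all roots of unity already lie in $\mathbb{C} \subset k(P)$, no residue-field extension is introduced and $f = 1$. Hence $eg = |G|$, and the theorem reduces to showing $e = |G|/d_P$, equivalently $g = d_P$.

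The main step is identifying $e$ with $M := \mathrm{lcm}_i\bigl( n_i/\gcd(n_i, a_i) \bigr)$. For the divisibility $M \mid e$, I would extend the normalized valuation $v$ (with $v(t) = 1$) to a branch $L$ of the cover; from $z_i^{n_i} = t^{a_i}$ one reads $w(z_i) = a_i/n_i$, whence $\tfrac{1}{M}\mathbb{Z} \subseteq w(L^\times) = \tfrac{1}{e}\mathbb{Z}$. For the reverse inequality, each branch's function field is generated by the $z_i = \eta_i\, t^{a_i/n_i}$ with $\eta_i$ a root of unity, hence is contained in the Kummer extension $K(t^{1/M})$ of degree $M$ over $K$; combined with $ef = [L:K]$ and $f = 1$ this forces $e \leq M$. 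The delicate part of the argument, which would require care, is tracking how the $G$-action and the Kummer Galois group $\mathrm{Gal}(K(t^{1/M})/K)$ jointly act on the $|G|$ geometric branches, so as to confirm that each connected branch of the local cover does have function field of degree exactly $M$.

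Finally, a short $\gcd$ manipulation identifies $d_P$: for each $i$, $\gcd(|G|,\ |G|a_i/n_i) = |G|\gcd(n_i,a_i)/n_i = |G|/n_i'$, where $n_i' := n_i/\gcd(n_i, a_i)$; taking $\gcd$ over all $i$ yields $d_P = |G|/\mathrm{lcm}_i(n_i') = |G|/M$. Therefore $g = d_P$ and $\pi^* P = e\bar{P} = (|G|/d_P)\bar{P}$, which is the desired conclusion.
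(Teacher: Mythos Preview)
The paper does not supply its own proof of this theorem; it is quoted from \cite{Gao} without argument. So there is nothing in the paper to compare your proposal against, and I will simply assess the proposal on its own terms.

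Your overall architecture is the right one: localize at the generic point of $P$, use that the cover is Galois so that a single ramification index $e$ governs $\pi^*P = e\,\bar P$, identify $e$ with $M=\mathrm{lcm}_i\bigl(n_i/\gcd(n_i,a_i)\bigr)$, and finish with the (correct) gcd identity $d_P=|G|/M$. The valuation argument for $M\mid e$ and the final arithmetic are fine.

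There is, however, a genuine gap in your reduction step. Passing to the Henselization of $A=\mathcal O_{Y,\eta_P}$ does \emph{not} let you extract $n_i$-th roots of the units $u_i$: the Henselization has the same residue field $k(P)$ as $A$, and $k(P)$ is a function field over $\mathbb C$, not $\mathbb C$ itself. Hensel's lemma needs a simple root of $X^{n_i}-\bar u_i$ already in $k(P)$, which need not exist (e.g.\ $Y=\mathbb A^2_{s,t}$, $P=\{t=0\}$, $f=s$: then $\bar u=s$ has no square root in $\mathbb C(s)$). For the same reason your assertion $f=1$ is false in general; in that example $e=1$, $f=2$, $g=1$.

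Two clean fixes are available. Either pass to the \emph{strict} Henselization (residue field $\overline{k(P)}$), where your simplification $z_i^{n_i}=t^{a_i}$ and $f=1$ do hold; since $e$ is insensitive to unramified base change, the computation of $e$ descends. Or avoid killing the $u_i$ altogether: from $z_i^{n_i}=u_i t^{a_i}$ one still gets $n_i'\mid e$ and hence $M\mid e$; for the reverse, embed $L$ into $K\bigl(u_1^{1/n_1},\ldots,u_k^{1/n_k},\,t^{1/M}\bigr)$ and use that adjoining roots of units is unramified over $(t)$ while adjoining $t^{1/M}$ contributes ramification exactly $M$, so $e\mid M$. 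Either route closes the gap and yields $e=M=|G|/d_P$ as claimed.
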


\section{abelian canonical $n$-folds}
Let $X$ and $Y$ be projective algebraic varieties, with $X$ normal and $Y$ non-singular, and let $\varphi:X\to Y$ be an abelian cover associated to the abelian group $G\cong\mathbb Z_{n_1}\oplus\cdots\oplus\mathbb Z_{n_k}$, where $n_1|n_2\cdots|n_k$. Using the notations after Definition \ref{def}, we have that $X$ is the normalization of the $n$-fold defined by
\begin{equation}\label{equa}
z_1^{n_1}=f_1, \cdots, z_k^{n_k}=f_k.
\end{equation}

We say that an abelian cover $\varphi:X\to Y$ is \emph{totally ramified} if the inertia subgroups
of the divisorial components of the branch locus of $\varphi$ generate $G$, or, equivalently,
if $\varphi$ does not factorize through a cover $X'\to Y$ that is $\acute{e}$tale over $Y$. Note that if $Cl(Y)$ has no torsion, then every connected abelian cover of $Y$ is totally ramified.

\begin{lemma}\label{contr}
With notations as above, suppose that the cover is totally ramified and $D_1, D_2, \cdots D_m$ are the irreducible components of the branch loci of the abelian cover $\varphi$ with ramification indices $r_1, r_2, \cdots, r_m$ respectively.  Then $n_k\leqslant \prod_{i=1}^m r_i$.
\end{lemma}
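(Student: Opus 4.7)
The plan is to combine the standard fact that tame inertia is cyclic with the total ramification hypothesis and a short piece of elementary abelian group theory. For each branch component $D_i$, let $I_i \subseteq G$ denote the inertia subgroup at the generic point of (any component of) $\varphi^{-1}(D_i)$; since $G$ is abelian the inertia subgroups at any two components of the preimage coincide, so $I_i$ is unambiguously defined.

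The first step is to identify $|I_i|$ with the ramification index $r_i$. In characteristic zero, $I_i$ acts faithfully on a uniformizer of the completed local ring at the generic point of the preimage, producing a faithful character $I_i \hookrightarrow \mathbb{C}^*$; hence $I_i$ is cyclic of order $r_i$. This can also be read off directly from Theorem \ref{branch}: if $\bar D_i$ denotes the reduced preimage of $D_i$, then $\varphi^{*}D_i = (|G|/d_{D_i})\,\bar D_i$, so the ramification index is $|G|/d_{D_i} = |I_i|$.

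The second step invokes the total ramification hypothesis, which says that $I_1,\ldots,I_m$ generate $G$. Since $G$ is abelian, the summation map
\[
I_1 \oplus I_2 \oplus \cdots \oplus I_m \longrightarrow G, \qquad (g_1,\ldots,g_m) \mapsto g_1 + g_2 + \cdots + g_m,
\]
is a surjective group homomorphism. Consequently $|G| \leqslant |I_1|\cdots|I_m| = r_1 r_2 \cdots r_m$, and since $n_k$ divides $|G|=n_1 n_2 \cdots n_k$ we obtain
\[
n_k \;\leqslant\; |G| \;\leqslant\; \prod_{i=1}^{m} r_i,
\]
which is the desired bound.

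The only point requiring any care is the identification of the ramification index with the order of a cyclic inertia subgroup, which is precisely what Theorem \ref{branch} encodes; the remainder is formal. In fact the same reasoning yields the sharper equality $n_k = \mathrm{lcm}(r_1,\ldots,r_m)$, since each cyclic $I_i$ has exponent $r_i$ and $n_k = \exp(G)$, but the weaker product bound is all that is needed here.
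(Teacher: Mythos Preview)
Your proof is correct and takes a genuinely different route from the paper. The paper argues prime by prime: writing $n_k = \prod_j p_j^{e_{k_j}}$, it shows by contradiction that for each $j$ some ramification index $r_i$ is divisible by $p_j^{e_{k_j}}$; the contradiction is extracted from the explicit formula in Theorem~\ref{branch} for $d_P$, which forces the defining equations~(\ref{equa}) to split, contradicting irreducibility of $X$. This yields $n_k \mid \mathrm{lcm}(r_1,\ldots,r_m)$ and hence the product bound. Your argument bypasses all the arithmetic with $d_P$ and instead uses the structural fact that inertia in an abelian cover in characteristic zero is cyclic of order $r_i$, together with the very definition of ``totally ramified'' (the inertia subgroups generate $G$); the surjection $\bigoplus_i I_i \twoheadrightarrow G$ then gives the stronger inequality $|G|\leqslant \prod_i r_i$ in one line. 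The paper's approach has the virtue of staying entirely within the computational framework of \cite{Gao} already set up in the preceding section, whereas yours is shorter, more conceptual, and immediately delivers the sharper identity $n_k=\exp(G)=\mathrm{lcm}(r_1,\ldots,r_m)$ that you note at the end.
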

\begin{proof}
Suppose $n_t=p_1^{e_{t_1}} \cdots p_s^{e_{t_s}}$ , $1\leqslant t\leqslant k$ is the prime decomposition of $n_t$. Without loss of generality, we only need to show that there exist $D_j$ such that its ramification index $r_j$ is divisible by $p_s^{e_{k_s}}$. Otherwise, for any irreducible component of the branch locus, we would have for the ramification index

\[\frac{|G|}{d}=p_1^{e_1}\cdots p_s^{e_{s}}, \] such that $e_s<e_{k_s}$, where $$ d= \gcd\left(\,|G|, \ |G|\dfrac{a_1}{n_1}
,\ \cdots, \ |G|\dfrac{a_k}{n_k}  \, \right),$$ and $a_i$ is the multiplicity of the irreducible component in $\textrm{div}(f_i)$ by Theorem \ref{branch}. Thus \[d=p_1^{\sum_{j=1}^{k}e_{j_1}-e_1} \cdots  p_s^{\sum_{j=1}^{k}e_{j_s}-e_s}, \] which yields \[d\  |\  p_1^{\sum_{j=1}^{k-1}e_{j_1}} \cdots  p_s^{\sum_{j=1}^{k-1}e_{j_s}}a_k.\]
Hence $p_s^{e_{k_s}-e_s}\ |\  a_k$, which implies equations (\ref{equa}) split and so $X$ is not irreducible, a contradiction.
\end{proof}

\begin{theorem}
Let $Y$ be a non-singular projective $n$-fold such that the divisor class group $Cl(Y)$ torsion free, and fix both a divisor $L$ and an ample divisor $A$ on $Y$. Consider the following set of abelian covers
 \[\mathscr{C}_{L}:=\{\varphi: X\rightarrow Y\ |\  X\  \text{is nonsingular and}\  K_X=\varphi^*L\}.\]
 Then there exists a constant $C_{L,A}$ such that for any $\varphi\in \mathscr{C}_{L}$, deg $\varphi\leqslant C_{L,A}$.
\end{theorem}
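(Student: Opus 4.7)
The plan is to use the Hurwitz ramification formula to turn $K_X=\varphi^*L$ into a numerical constraint on the branch data, and then combine this with Lemma~\ref{contr} to bound first the exponent and then the order of $G$. First I would use that $Cl(Y)$ is torsion-free (so every connected abelian cover of $Y$ is totally ramified, as noted in the text) to ensure Lemma~\ref{contr} applies to any $\varphi\in\mathscr{C}_L$ with $G\cong\mathbb{Z}_{n_1}\oplus\cdots\oplus\mathbb{Z}_{n_k}$. Let $B_1,\dots,B_m$ be the irreducible branch components with ramification indices $r_1,\dots,r_m$. Theorem~\ref{branch} gives $\varphi^*B_j=r_j\bar B_j$ with $\bar B_j$ reduced, and Hurwitz then yields
\[K_X=\varphi^*K_Y+\sum_{j=1}^m(r_j-1)\bar B_j=\varphi^*\!\left(K_Y+\sum_{j=1}^m\Big(1-\tfrac{1}{r_j}\Big)B_j\right).\]
Since $\varphi_*\varphi^*=|G|\cdot\mathrm{id}$ on divisor classes and $\mathrm{Cl}(Y)$ is torsion-free, $\varphi^*$ is injective, so $K_X=\varphi^*L$ forces the key identity $L-K_Y\equiv\sum_j(1-1/r_j)B_j$ in $\mathrm{Pic}(Y)\otimes\mathbb{Q}$.

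Intersecting with $A^{n-1}$ and setting $M:=(L-K_Y)\cdot A^{n-1}$ and $\beta_j:=B_j\cdot A^{n-1}\in\mathbb{Z}_{>0}$ (positive by ampleness of $A$), the identity becomes $\sum_j(1-1/r_j)\beta_j=M$. Since $1-1/r_j\ge 1/2$, this gives $\sum_j\beta_j\le 2M$, so both $m$ and each $\beta_j$ are bounded by $2M$. Rewriting as the Diophantine equation $\sum_j\beta_j/r_j=N$ with $N:=(\sum_j\beta_j)-M\ge 1$ a fixed positive integer, the largest term $\beta_j/r_j$ is at least $N/m$, bounding the corresponding $r_j$ by $m\beta_j/N\le 4M^2$; fixing that index and value reduces to the same problem with $m-1$ unknowns, and iterating bounds every $r_j$. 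Since there are only finitely many configurations $(m,\beta_1,\dots,\beta_m)$ and, for each, finitely many integer solutions $(r_j\ge 2)$, there is a uniform constant $C_1=C_1(L,A,Y)$ with $\prod_jr_j\le C_1$.

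Finally, Lemma~\ref{contr} yields $n_k\le\prod_jr_j\le C_1$. Because $G$ is generated by the $m$ cyclic inertia subgroups of the branch components, the minimal number of generators $k$ of $G$ satisfies $k\le m\le 2M$. Hence
\[\deg\varphi=|G|=n_1n_2\cdots n_k\le n_k^{\,k}\le C_1^{\,2M}=:C_{L,A}.\]

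The main obstacle I anticipate is bounding the individual ramification indices in the middle step: the identity $\sum\beta_j(1-1/r_j)=M$ alone would permit Egyptian-fraction-style configurations with unbounded $r_j$, and it is the \emph{integer} refinement $\sum\beta_j/r_j=N\in\mathbb{Z}_{>0}$ together with the finiteness of $(\beta_j)$-configurations that actually pins them down. Once this is achieved, Lemma~\ref{contr} and the fact that $G$ is generated by its inertia subgroups translate everything into a bound on $|G|$ essentially for free.
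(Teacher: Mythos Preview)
Your proposal is correct and follows essentially the same route as the paper: Hurwitz formula, intersection with $A^{n-1}$, an Egyptian-fraction finiteness argument to bound the $r_j$, and Lemma~\ref{contr} to bound the exponent $n_k$. Your final step, bounding the number of invariant factors $k$ by $m$ via the fact that the inertia subgroups generate $G$ (total ramification), is in fact cleaner than the paper's somewhat vague assertion that ``the number of the equations is also finite, say $w$.''
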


\begin{proof}
Since Cl($Y$) is torsion free, all $D_i$'s in the defining data of each abelian cover over $Y$ as in Definition \ref{def} are non-zero. Suppose that $D_1, D_2, \cdots, D_m$ are the irreducible components of the branch locus of an abelian cover $\varphi\in \mathscr{C}_L$ with the ramification indices $r_1, r_2, \cdots, r_m$, where $r_i\geqslant 2, i=1,\cdots m$. Fix an ample divisor $A$ on $Y$ and write $d_i=D_iA^{n-1}$.  By the Hurwitz formula,

\begin{equation}\label{est}
K_X=\varphi^*(K_{Y}+\sum_{i=1}^{m}\frac{r_i-1}{r_i}D_i).
\end{equation}
On the other hand, $K_X=\varphi^*(L)$, thus
\begin{equation}
(L-K_Y)A^{n-1}=\sum_{i=1}^{m}\frac{r_i-1}{r_i}d_i,
\end{equation}
i.e.,
\begin{equation}\label{est2}
\ \sum_{i=1}^{m}\frac{1}{r_i}d_i=m-(L-K_Y)A^{n-1}.\end{equation}
We then have \[(L-K_Y)A^{n-1}< m \leqslant\sum_{i=1}^{m} d_i\leqslant 2\cdot(L-K_Y)A^{n-1}.\]
Without loss of generality suppose $r_1 \leqslant r_2 \leqslant \cdots \leqslant r_m$.  Then from (\ref{est2}) we have
\[\frac{1}{r_1}\sum_{i=1}^{m}d_i\geqslant m-(L-K_Y)A^{n-1},\] thus \[r_1\leqslant \sum_{i=1}^{m}d_i\leqslant 2\cdot (L-K_Y)A^{n-1}=:C_1.\] For fixed $m$, $r_1$ and $d_1$, \[\frac{1}{r_2}\sum_{i=2}^{m}d_i\geqslant \sum_{i=2}^{m}\frac{1}{r_i}d_i=m-(L-K_Y)A^{n-1}-\frac{d_1}{r_1},\] which yields

\begin{equation} r_2\leqslant \frac{2\cdot (L-K_Y)A^{n-1}}{m- (L-K_Y)A^{n-1}-\frac{d_1}{r_1}}. \end{equation}
As such, there exists a constant $C_2$ with $r_2\leqslant C_2$. By induction, there exists a constant $C_m$ such that $r_m\leqslant C_m$. By Lemma \ref{contr}, $n_k\leqslant \prod_{i=1}^m r_i\leqslant \prod_{i=1}^m C_i$, and since $n_k$, $m$ and $\sum_{i=1}^m d_i$ are finite, the number of the equations (\ref{equa}) is also finite, say $w$. So for any $\varphi\in \mathscr{C}_L$, deg $\varphi$ is bounded by the constant

$$C_{L,A}:=(\prod_{i=1}^m C_i)^w. $$
\end{proof}

Now let $X$ be a minimal projective $n$-fold of general type with at worst locally factorial terminal singularities. As we have defined in  \S\ref{secintro}, if $|K_X|$ defines an abelian cover over $\mathbb{P}^n$ then we call $X$ an abelian canonical $n$-fold.

\begin{corollary}
If $X$ is an abelian canonical $n$-fold, then there exists a constant $C(n)$ depending only on $n$ such that the canonical degree of $X$ is universally bounded by $C(n)$.
\end{corollary}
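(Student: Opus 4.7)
The plan is to derive the corollary as an immediate specialization of the preceding theorem to $Y=\mathbb{P}^n$. First, I would observe that $Cl(\mathbb{P}^n)\cong\mathbb{Z}$ is torsion-free, so the standing hypothesis of the theorem on the divisor class group is automatically satisfied. I would then take both the reference divisor and the ample divisor to be the hyperplane class, i.e.\ $L=A=H$.

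Next I would translate the abelian-canonical hypothesis into membership of the canonical map $\varphi$ in the set $\mathscr{C}_L$. By definition, if $X$ is an abelian canonical $n$-fold then $|K_X|$ defines an abelian cover $\varphi:X\to\mathbb{P}^n$, and since the map is given by the canonical linear system the pullback of the hyperplane satisfies $\varphi^*H\sim K_X$; that is, $K_X=\varphi^*L$. Combined with the non-singularity assumption recorded in the abstract, this places $\varphi\in\mathscr{C}_L$, whence $\deg\varphi\leqslant C_{L,A}$ by the theorem.

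The final step is to verify that $C_{L,A}$ depends only on $n$. With these choices $K_Y=-(n+1)H$, so $(L-K_Y)A^{n-1}=(n+2)H^n=n+2$; this bounds the number $m$ of branch components by $2(n+2)$, which in turn makes the inductive bounds $C_i$ on the ramification indices $r_i$ depend only on $n$. Via Lemma \ref{contr} the order $n_k$ of the largest cyclic factor, and hence the number $w$ of defining equations of type \eqref{equa}, is also controlled purely in terms of $n$. Setting $C(n):=C_{H,H}$ yields the desired constant, and since the canonical degree of $X$ equals $\deg\varphi=K_X^n$, it is bounded by $C(n)$.

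Since each step is a direct translation from the theorem, I do not anticipate a substantive obstacle. The only point worth double-checking is that the canonical map of $X$ genuinely arises as an abelian cover of the form covered by Definition \ref{def} (with a concrete system of equations of type \eqref{equa}) rather than only carrying an abelian Galois action in some weaker sense, so that the theorem applies verbatim; this is guaranteed by the very definition of an abelian canonical $n$-fold.
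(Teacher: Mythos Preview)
Your proposal is correct and follows essentially the same approach as the paper: the paper's entire proof is ``Take $L=H$ in the theorem above, where $H$ corresponds to a hyperplane in $\mathbb{P}^n$.'' You have simply spelled out the verifications (torsion-freeness of $Cl(\mathbb{P}^n)$, $K_X=\varphi^*H$, and why $C_{H,H}$ depends only on $n$) that the paper leaves implicit.
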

\begin{proof}
Take $L=H$ in the theorem above, where $H$ corresponds to a hyperplane in $\mathbb{P}^n$.
\end{proof}

\begin{proposition}\label{decom} Let $X$ be a non-singular abelian canonical $n$-fold and $\varphi$
be the finite abelian cover of degree $d$ over $\mathbb{P}^n$ defined by $|K_X|$, then $c_1((\varphi_*\mathcal{O}_X)^{\vee})=\frac{n+2}{2}\cdot d$ and
\[\varphi_*\mathcal{O}_X=\mathcal{O}_{\mathbb{P}^n}\oplus\mathcal{O}_{\mathbb{P}^n}(-n-2)\oplus(\mathcal{O}_{\mathbb{P}^n}(-2)\oplus\mathcal{O}_{\mathbb{P}^n}(-n))^{\oplus
k_2}\oplus(\mathcal{O}_{\mathbb{P}^n}(-3)\]
\[\oplus\mathcal{O}_{\mathbb{P}^n}(-n+1))^{\oplus
k_3}\oplus\cdots\oplus(\mathcal{O}_{\mathbb{P}^n}(-t)\oplus\mathcal{O}_{\mathbb{P}^n}(-n-2+t))^{\oplus
k_t}\oplus\cdots,\] where $\mathcal{O}_{\mathbb{P}^n}(-t)$  appears the same number of  times as $\mathcal{O}_{\mathbb{P}^n}(-n-2+t))$ in the direct sum.
\end{proposition}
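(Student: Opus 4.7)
The plan is to combine Grothendieck duality with the projection formula to derive a self-dual relation on $\varphi_*\mathcal{O}_X$, and then to use the constraint $h^0(X,K_X) = n+1$ to eliminate the would-be degree-one summand. By Theorem \ref{F}, since $\mathrm{Pic}(\mathbb{P}^n) = \mathbb{Z}\cdot H$, every $L_g$ is an integer multiple of $H$, so
\[\varphi_*\mathcal{O}_X \;=\; \bigoplus_{g \in G}\mathcal{O}_{\mathbb{P}^n}(-a_g)\]
for integers $a_g$. Since $X$ is connected and smooth, $\sum_{g}h^0(\mathcal{O}_{\mathbb{P}^n}(-a_g)) = h^0(\mathcal{O}_X) = 1$ forces $a_g \geqslant 0$ with equality only at $g=0$.

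The second step exploits $K_X = \varphi^*H$, which holds because $|K_X|$ is base-point-free and defines $\varphi$. The projection formula gives $\varphi_*\omega_X = \varphi_*\mathcal{O}_X \otimes \mathcal{O}_{\mathbb{P}^n}(1)$, while relative Grothendieck duality (applicable because $\varphi$ is finite flat between smooth varieties of the same dimension) gives $\varphi_*\omega_X = (\varphi_*\mathcal{O}_X)^{\vee} \otimes \omega_{\mathbb{P}^n} = (\varphi_*\mathcal{O}_X)^{\vee}(-n-1)$. Equating the two yields the key identity
\[(\varphi_*\mathcal{O}_X)^{\vee} \;=\; \varphi_*\mathcal{O}_X \otimes \mathcal{O}_{\mathbb{P}^n}(n+2).\]
Taking first Chern classes produces $c_1((\varphi_*\mathcal{O}_X)^{\vee}) = -c_1((\varphi_*\mathcal{O}_X)^{\vee}) + (n+2)d$, hence $c_1((\varphi_*\mathcal{O}_X)^{\vee}) = \tfrac{n+2}{2}d$, settling the first assertion.

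Translated to the splitting, the identity says the multiset $\{a_g\}_{g\in G}$ is invariant under the involution $a \mapsto n+2 - a$, so $a_g \in \{0,1,\ldots,n+2\}$ and the summands pair up as $\mathcal{O}(-a) \leftrightarrow \mathcal{O}(-(n+2-a))$ with equal multiplicity. The unique $\mathcal{O}$ summand therefore matches a unique $\mathcal{O}(-(n+2))$ summand. To rule out $\mathcal{O}(-1)$, I would apply Corollary \ref{h}:
\[n+1 \;=\; h^0(X,K_X) \;=\; h^0\!\left(X,\varphi^*\mathcal{O}_{\mathbb{P}^n}(1)\right) \;=\; \sum_{g}h^0\!\left(\mathcal{O}_{\mathbb{P}^n}(1-a_g)\right).\]
Only $a_g = 0$ (contributing $n+1$) and $a_g = 1$ (contributing $1$ each) give nonzero terms, so the equality forces $\#\{g : a_g = 1\} = 0$, and by the involution $\#\{g : a_g = n+1\} = 0$ as well. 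The remaining exponents thus lie in $\{2,3,\ldots,n\}$, paired by $t \leftrightarrow n+2-t$, which produces the claimed decomposition.

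I expect the conceptual crux to be establishing the self-duality $(\varphi_*\mathcal{O}_X)^{\vee} = \varphi_*\mathcal{O}_X(n+2)$ cleanly from the projection formula plus relative duality; once it is in place, both claims follow from symmetry of exponents and the $h^0(K_X)$ count, with essentially no serious obstacle remaining.
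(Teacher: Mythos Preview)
Your proof is correct and follows essentially the same route as the paper: both derive the self-duality $(\varphi_*\mathcal{O}_X)^{\vee}\cong\varphi_*\mathcal{O}_X\otimes\mathcal{O}_{\mathbb{P}^n}(n+2)$ by combining relative duality with the projection formula applied to $\omega_X=\varphi^*\mathcal{O}_{\mathbb{P}^n}(1)$, read off the $c_1$ and the pairing $t\leftrightarrow n+2-t$ from it, and then use $p_g(X)=n+1$ to exclude the summand $\mathcal{O}_{\mathbb{P}^n}(-1)$. Your write-up is slightly more explicit (justifying $a_g>0$ for $g\neq 0$ via $h^0(\mathcal{O}_X)=1$ and noting $a_g\le n+2$ from the involution), but the argument is the same.
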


\begin{proof}
If $\varphi$ is a finite abelian cover, $\varphi_*\mathcal{O}_X$ is a
direct sum of line bundles by Theorem \ref{F}, i.e.
$$\varphi_*\mathcal{O}_X\cong\mathcal{O}_{\mathbb{P}^n}\oplus
\bigoplus_{i=1}^{d-1}\mathcal{O}_{\mathbb{P}^n}(-l_i).$$ Now assume
$0<l_1\leqslant l_2\leqslant \cdots \leqslant l_{d-1}$. By relative duality,
\[\varphi_*\omega_X\cong(\varphi_*\mathcal{O}_X)^{\vee}\otimes\omega_{\mathbb{P}^n}\cong(\varphi_*\mathcal{O}_X)^{\vee}\otimes\mathcal{O}_{\mathbb{P}^n}(-n-1),\] and since
$\omega_X=\varphi^*(\mathcal{O}_{\mathbb{P}^n}(1))$, by the projection formula we have \[\varphi_*\omega_X\cong\varphi_*\varphi^*(\mathcal{O}_{\mathbb{P}^n}(1))\cong\mathcal{O}_{\mathbb{P}^n}(1)\otimes\varphi_*\mathcal{O}_X,\] so that \[(\varphi_*\mathcal{O}_X)^{\vee}\cong\varphi_*\mathcal{O}_X\otimes\mathcal{O}_{\mathbb{P}^n}(n+2),\]
from which it follows $\mathcal{O}_{\mathbb{P}^n}(-t)$  appears the same number of  times as $\mathcal{O}_{\mathbb{P}^n}(-n-2+t)$ in the direct sum. Therefore $2c_1((\varphi_*\mathcal{O}_X)^{\vee})=(n+2)d$, i.e., $c_1((\varphi_*\mathcal{O}_X)^{\vee})=\frac{n+2}{2}\cdot d$, so we only need to show that $t\geqslant 2$. For this, it follows by the definition of an abelian canonical $n$-fold that
$$p_g(X)=h^0(K_X)=h^0(\varphi^*(\mathcal{O}_{\mathbb{P}^n}(1)))
=n+1,$$ thus by the projection formula we have \[h^0(\mathcal{O}_{\mathbb{P}^n}(1))+
\sum_{i=1}^{d-1}h^0(\mathcal{O}_{\mathbb{P}^n}(1-l_i))=n+1.\]
We then have
$$h^0(\mathcal{O}_{\mathbb{P}^n}(1-l_i))=0, \quad 1\le i \le d-1,$$ thus $l_i\geqslant 2$.
\end{proof}

Now suppose $\varphi: X\rightarrow \mathbb{P}^n$ is an abelian cover
associated to an abelian group $G\cong\mathbb
Z_{n_1}\oplus\cdots\oplus\mathbb Z_{n_k}$, such that $n_1|n_2|\cdots|n_k$. Then $X$ is the
normalization of the $n$-fold defined by
\begin{equation}
z_1^{n_1}=f_1=\prod_\alpha p_\alpha^{\alpha_1}, \cdots, z_k^{n_k}=f_k=\prod_\alpha p_\alpha^{\alpha_k},
\end{equation}
where the $p_{\alpha}$'s are coprime and $\alpha=(\alpha_1,\cdots,
\alpha_k)\in G$, $\alpha_1,\cdots,
\alpha_k<n_k$. Denote  by $x_\alpha$ the degree of $p_\alpha$,
$e_i=(0,\cdots,0,1,0,\cdots,0)\in G$ with $1\leq i \leq k$, and denote by $l_g$
the degree of $L_g$. The $x_\alpha$ and $l_g$ are then integers. We then have
\begin{eqnarray}\label{s1}
&n_il_{e_i}=\sum\limits_\alpha \alpha_ix_\alpha\quad i=1,\cdots k,\\\label{s2}
& l_g=\sum\limits_{i=1}^k g_i
l_{e_i}-\sum\limits_\alpha\left[\sum\limits_{i=1}^k
\frac{\displaystyle{g_i\alpha_i}}{\displaystyle
 {n_i}}\right]x_\alpha.
 \end{eqnarray}

\section{abelian canonical $4$-folds}
Since the canonical degrees of non-singular abelian canonical $4$-folds have universal bounds, it is interesting to find some examples of such $4$-folds such that the canonical degree is as close to the bound as possible.

By  Proposition \ref{decom} and equations (\ref{s1}) and (\ref{s2}), we construct two examples of nonsingular abelian canonical $4$-folds (cf. \cite{D-G2} Theorem 3.6).

\begin{example}
Let $X$ be the canonical abelian $4$-fold which is the normalization of the variety whose defining equations are given by
$$\left\{\begin{array}{l}
z_1^2=h_{1}h_{8}h_{9}h_{10}\\
z_2^2=h_{2}h_{8}h_{9}h_{11}\\
z_3^2=h_{3}h_{8}h_{9}h_{12}\\
z_4^2=h_{4}h_{8}h_{10}h_{11}\\
z_5^2=h_{5}h_{8}h_{10}h_{12}\\
z_6^2=h_{6}h_{9}h_{10}h_{11}\\
z_7^2=h_{7}h_{9}h_{10}h_{12},
\end{array}\right .$$
where the hyperplanes $H_i$'s defined by $h_i$'s  are normal crossing in $\mathbb{P}^4$.

First, we will show that $X$ is nonsingular.
Obviously, the possible singularities of $X$ lie in the preimages of the intersections of the branch locus.
Without loss of generality, we assume that $P$ is the intersection point of $H_8$, $H_9$, $H_{10}$ $H_{11}$. The cover is locally defined by the following equations at $P$:
$$z_1^2=xyu, z_2^2=xyt, z_3^2=xy, z_4^2=xut, z_5^2=xu, z_6^2=yut, z_7^2=yu.$$ After normalization, the cover $X$ is locally defined by
\[\bar{z}_1^2=x,\quad \bar{z}_2^2=t,\quad \bar{z}_3^2=u,\quad \bar{z}_4^2=y,\] thus $X$ is smooth at the preimages of $P$.

Similarly, it is easy to show that $X$ is non-singular on the preimage of the sets $H_i\cap H_j \cap H_k \cap H_l$, $H_i\cap H_j \cap H_k$ and $H_i\cap H_j$ for all $i,j,k,l$. So $X$ is nonsingular.

 Next, we want to calculate $l_g$ by (\ref{s2}). Note that in this example $x_i=$deg$(h_i)=1$, for $1\leq i \leq 12$. It is easy to check that $l_{e_i}=2$, for all $i$, by (\ref{s1}). For other $l_g$'s, we take $g=(1,1,0,0,0,0,0)$ and $g'=(1,1,1,1,1,1,1)$ for example.

\begin{equation}
\begin{split}
l_{g}&=l_{e_1}+l_{e_2}-\left[\frac{1+0}{2}\right]x_1-\left[\frac{0+1}{2}\right]x_2-\sum_{i=3}^7\left[\frac{0+0}{2}\right]x_i\\
&-\left[\frac{1+1}{2}\right]x_8-\left[\frac{1+1}{2}\right]x_9-\left[\frac{1+0}{2}\right]x_{10}-\left[\frac{0+1}{2}\right]x_{11}-\left[\frac{0+0}{2}\right]x_{12}\\
&=4-x_8-x_9\\
&=2,
 \end{split}
 \end{equation}

\begin{equation}
\begin{split}
l_{g'}&=\sum\limits_{i=1}^7 l_{e_i}-\sum_{i=1}^7\left[\frac{1}{2}\right]x_i-\left[\frac{1+1+1+1+1+0+0}{2}\right]x_8\\
&-\left[\frac{1+1+1+0+0+1+1}{2}\right]x_9-\left[\frac{1+0+0+1+1+1+1}{2}\right]x_{10}\\
&-\left[\frac{0+1+0+1+0+1+0}{2}\right]x_{11}-\left[\frac{0+0+1+0+1+0+1}{2}\right]x_{12}\\
&=14-\left[\frac{5}{2}\right]x_8-\left[\frac{5}{2}\right]x_9-\left[\frac{5}{2}\right]x_{10}-\left[\frac{3}{2}\right]x_{11}-\left[\frac{3}{2}\right]x_{12}\\
&=6.
 \end{split}
 \end{equation}

Finally, we have
\begin{equation}
\begin{split}
l_{(0,0,0,0,0,0,1)}&=l_{(0,0,0,0,0,1,0)}=l_{(0,0,0,0,0,1,1)}=l_{(0,0,0,0,1,0,0)}=l_{(0,0,0,0,1,0,1)}\\
=l_{(0,0,0,1,0,0,0)}&=l_{(0,0,0,1,0,1,0)}=l_{(0,0,0,1,1,0,0)}=l_{(0,0,0,1,1,1,1)}=l_{(0,0,1,0,0,0,0)}\\
=l_{(0,0,1,0,0,0,1)}&=l_{(0,0,1,0,1,0,0)}=l_{(0,0,0,0,1,0,1)}=l_{(0,0,0,0,1,1,0)}=l_{(0,0,1,1,0,0,1)}\\
=l_{(0,0,1,1,0,1,0)}&=l_{(0,1,0,0,0,0,0)}=l_{(0,1,0,0,0,1,0)}=l_{(0,1,0,0,1,0,1)}=l_{(0,1,0,0,1,1,0)}\\
=l_{(0,1,0,1,0,0,0)}&=l_{(0,1,0,0,0,0,1)}=l_{(0,1,0,1,0,1,0)}=l_{(0,1,1,0,0,0,0)}=l_{(0,1,1,0,0,1,1)}\\
=l_{(0,1,1,1,1,0,0)}&=l_{(1,0,0,0,0,0,0)}=l_{(1,0,0,0,0,0,1)}=l_{(1,0,0,0,0,1,0)}=l_{(1,0,0,0,1,0,0)}\\
=l_{(1,0,0,0,1,0,1)}&=l_{(1,0,0,1,0,0,0)}=l_{(1,0,0,1,0,1,0)}=l_{(1,0,1,0,0,0,0)}=l_{(1,0,1,0,0,0,1)}\\
=l_{(1,0,1,0,1,0,0)}&=l_{(1,1,0,0,0,0,0)}=l_{(1,1,0,0,0,1,0)}=l_{(1,1,0,1,0,0,0)}=2,
 \end{split}
 \end{equation}
\begin{equation}
\begin{split}
l_{(0,0,0,0,1,1,0)}&=l_{(0,0,0,0,1,1,1)}=l_{(0,0,0,1,0,0,1)}=l_{(0,0,0,1,0,1,1)}=l_{(0,0,0,1,1,0,1)}\\
=l_{(0,0,0,1,1,1,0)}&=l_{(0,0,1,0,0,1,0)}=l_{(0,0,1,0,0,1,1)}=l_{(0,0,1,1,0,0,0)}=l_{(0,0,1,1,0,1,1)}\\
=l_{(0,0,1,1,1,0,0)}&=l_{(0,0,1,1,1,1,0)}=l_{(0,1,0,0,0,0,1)}=l_{(0,1,0,0,0,1,1)}=l_{(0,1,0,0,1,0,0)}\\
=l_{(0,1,0,0,1,1,1)}&=l_{(0,1,0,1,1,0,0)}=l_{(0,1,0,1,1,0,1)}=l_{(0,1,1,0,0,0,1)}=l_{(0,1,1,0,0,1,0)}\\
=l_{(0,1,1,0,1,0,0)}&=l_{(0,1,1,0,1,1,0)}=l_{(0,1,1,1,0,0,0)}=l_{(0,1,1,1,0,0,1)}=l_{(1,0,0,0,1,1,0)}\\
=l_{(1,0,0,0,1,1,1)}&=l_{(1,0,0,1,0,0,1)}=l_{(1,0,0,1,0,1,1)}=l_{(1,0,0,1,1,0,1)}=l_{(1,0,0,1,1,1,0)}\\
=l_{(1,0,1,0,0,1,0)}&=l_{(1,0,1,0,0,1,1)}=l_{(1,0,1,1,0,0,0)}=l_{(1,0,1,1,0,1,1)}=l_{(1,0,1,1,1,0,0)}\\
=l_{(1,0,1,1,1,1,0)}&=l_{(1,1,0,0,0,0,1)}=l_{(1,1,0,0,0,1,1)}=l_{(1,1,0,0,1,0,0)}=l_{(1,1,0,0,1,1,1)}\\
=l_{(1,1,0,1,1,0,0)}&=l_{(1,1,0,1,1,0,1)}=l_{(1,1,1,0,0,0,1)}=l_{(1,1,1,0,0,1,0)}=l_{(1,1,1,0,1,0,0)}\\
=l_{(1,1,1,0,1,1,0)}&=l_{(1,1,1,1,0,0,0)}=l_{(1,1,1,1,0,0,1)}=3,
 \end{split}
 \end{equation}

\begin{equation}
\begin{split}
l_{(0,0,1,0,1,1,1)}&=l_{(0,0,1,1,1,0,1)}=l_{(0,0,1,1,1,1,1)}=l_{(0,1,0,1,0,1,1)}=l_{(0,1,0,1,1,1,0)}\\
=l_{(0,1,0,1,1,1,1)}&=l_{(0,1,1,0,1,0,1)}=l_{(0,1,1,0,1,1,1)}=l_{(0,1,1,1,0,1,0)}=l_{(0,1,1,1,0,1,1)}\\
=l_{(0,1,1,1,1,0,1)}&=l_{(0,1,1,1,1,1,0)}=l_{(0,1,1,1,1,1,1)}=l_{(1,0,0,0,0,1,1)}=l_{(1,0,0,1,1,0,0)}\\
=l_{(1,0,0,1,1,1,1)}&=l_{(1,0,1,0,1,0,1)}=l_{(1,0,1,0,1,1,0)}=l_{(1,0,1,0,1,1,1)}=l_{(1,0,1,1,0,0,1)}\\
=l_{(1,0,1,1,0,1,0)}&=l_{(1,0,1,1,1,0,1)}=l_{(1,0,1,1,1,1,1)}=l_{(1,1,0,0,1,0,1)}=l_{(1,1,0,0,1,1,0)}\\
=l_{(1,1,0,1,0,0,1)}&=l_{(1,1,0,1,0,1,0)}=l_{(1,1,0,1,0,1,1)}=l_{(1,1,0,1,1,1,0)}=l_{(1,1,0,1,1,1,1)}\\
=l_{(1,1,1,0,0,0,0)}&=l_{(1,1,1,0,0,1,1)}=l_{(1,1,1,0,1,1,0)}=l_{(1,1,1,0,1,1,1)}=l_{(1,1,1,1,0,1,0)}\\
=l_{(1,1,1,1,0,1,1)}&=l_{(1,1,1,1,1,0,0)}=l_{(1,1,1,1,1,0,1)}=l_{(1,1,1,1,1,1,0)}=4,
 \end{split}
 \end{equation}
$$l_{(0,0,0,0,0,0,0)}=0$$ and $$l_{(1,1,1,1,1,1,1)}=6.$$
Therefore, by Theorem \ref{F},
\[\varphi_*\mathcal{O}_X=\mathcal{O}_{\mathbb{P}^4}\oplus\mathcal{O}_{\mathbb{P}^4}(-6)\oplus\mathcal{O}_{\mathbb{P}^4}(-2)^{\oplus39}\oplus\mathcal{O}_{\mathbb{P}^4}(-3)^{\oplus48}\oplus\mathcal{O}_{\mathbb{P}^4}(-4)^{\oplus39}.\] Thus, by Corollary \ref{h}, $$p_g(X)=5, \quad q=h^{2,0}=h^{3,0}=0, \quad \chi(\omega_X)=6$$
and by Hurwitz formula, $K_X=\varphi^*\mathcal{O}_{\mathbb{P}^4}(1)$. So $X$ is minimal and the canonical degree of $X$ is $K_X^4=128$.
\end{example}

Similarly, we have the following example.
\begin{example}
Let $X$ be the canonical abelian $4$-fold which is the normalization of the variety whose defining equations are given by
$$\left\{\begin{array}{l}
z_1^3=h_{1}h_{5}^2h_{6}^2h_{7}\\
z_2^3=h_{2}h_{6}^2h_{7}h_{8}^2\\
z_3^3=h_{3}h_{7}h_{8}^2h_{9}^2\\
z_4^3=h_{4}h_{5}^2h_{7}h_{9}^2,
\end{array}\right.$$
where the hyperplanes $H_i$'s defined by $h_i$'s intersect with normal crossings in $\mathbb{P}^4$. 

The arguments of smoothness of $X$ and the calculation of $l_g$'s are as similar as the above example. Finally, we have
\begin{equation}
\begin{split}
l_{(0,0,0,1)}&=l_{(0,0,0,2)}=l_{(0,0,1,0)}=l_{(0,0,1,2)}=l_{(0,0,2,0)}=l_{(0,0,2,1)}=l_{(0,1,0,0)}\\
=l_{(0,1,2,0)}&=l_{(0,2,0,0)}=l_{(0,2,1,0)}=l_{(1,0,0,0)}=l_{(1,0,0,2)}=l_{(1,2,0,0)}=l_{(1,2,1,2)}\\
=l_{(2,0,0,0)}&=l_{(2,0,0,1)}=l_{(2,1,0,0)}=l_{(2,1,2,1)}=2,
 \end{split}
 \end{equation}
 \begin{equation}
\begin{split}
l_{(0,0,1,1)}&=l_{(0,0,2,2)}=l_{(0,1,0,2)}=l_{(0,1,1,0)}=l_{(0,1,1,1)}=l_{(0,1,1,2)}=l_{(0,1,2,1)}\\
=l_{(0,2,0,1)}&=l_{(0,2,0,2)}=l_{(0,2,1,1)}=l_{(0,2,1,2)}=l_{(0,2,2,0)}=l_{(1,0,0,1)}=l_{(1,0,1,1)}\\
=l_{(1,0,1,2)}&=l_{(1,0,2,0)}=l_{(1,0,2,1)}=l_{(1,1,0,0)}=l_{(1,1,0,1)}=l_{(1,1,0,2)}=l_{(1,1,1,0)}\\
=l_{(1,1,1,1)}&=l_{(1,1,1,2)}=l_{(1,1,2,0)}=l_{(1,1,2,1)}=l_{(1,1,2,2)}=l_{(1,2,0,1)}=l_{(1,2,0,2)}\\
=l_{(1,2,1,0)}&=l_{(1,2,1,1)}=l_{(1,2,2,1)}=l_{(2,0,0,2)}=l_{(2,0,1,0)}=l_{(2,0,1,1)}=l_{(2,0,2,0)}\\
=l_{(2,0,2,1)}&=l_{(2,1,0,1)}=l_{(2,1,1,0)}=l_{(2,1,1,1)}=l_{(2,1,1,2)}=l_{(2,1,2,0)}=l_{(2,2,0,0)}\\
=l_{(2,2,1,1)}&=3,
 \end{split}
 \end{equation}
 \begin{equation}
\begin{split}
l_{(0,1,0,1)}&=l_{(0,1,2,2)}=l_{(0,2,2,1)}=l_{(0,2,2,2)}=l_{(1,0,1,0)}=l_{(1,0,2,2)}=l_{(1,2,2,0)}\\
=l_{(1,2,2,2)}&=l_{(2,0,1,2)}=l_{(2,0,2,2)}=l_{(2,1,0,2)}=l_{(2,1,2,2)}=l_{(2,2,0,1)}=l_{(2,2,0,2)}\\
=l_{(2,2,1,0)}&=l_{(2,2,1,2)}=l_{(2,2,2,0)}=l_{(2,2,2,1)}=4,
 \end{split}
 \end{equation}
 $$l_{(0,0,0,0)}=0$$ and $$l_{(1,1,1,1)}=6.$$
So
\[\varphi_*\mathcal{O}_X=\mathcal{O}_{\mathbb{P}^4}\oplus\mathcal{O}_{\mathbb{P}^4}(-6)\oplus\mathcal{O}_{\mathbb{P}^4}(-2)^{\oplus18}\oplus\mathcal{O}_{\mathbb{P}^4}(-3)^{\oplus43}\oplus\mathcal{O}_{\mathbb{P}^4}(-4)^{\oplus18}.\] Thus, by Corollary \ref{h}, $p_g(X)=5$, $q=h^{2,0}=h^{3,0}=0$, $\chi(\omega_X)=6$ and by Hurwitz formula, $K_X=\varphi^*\mathcal{O}_{\mathbb{P}^4}(1)$. So $X$ is minimal and the canonical degree of $X$ is $K_X^4=81$.  
\end{example}

\section*{Acknowledgements}
Both authors would like to thank the referee for comments which lead to a generalization of the main theorem, and for other useful suggestions which lead to a considerable improvement of the paper.

\end{document}